\date{}
\renewcommand{\uppercasenonmath}[1]{}
\theoremstyle{plain}
\newtheorem{theorem}{Theorem}[section]
\newtheorem{proposition}[theorem]{Proposition}
\newtheorem{problem}[theorem]{Problem}
\newtheorem{example}[theorem]{Example}
\newtheorem{definition}[theorem]{Definition}
\theoremstyle{definition}
\theoremstyle{remark}
\newtheorem{remark}[theorem]{Remark}
\def\m{\frak m}
\def\cf{{\rm cf}}
\begin{document}
\begin{center}
{\large  \bf On the existence of parameterized noetherian rings}

\vspace{0.5cm} Xiaolei Zhang

\end{center}

\bigskip
\centerline { \bf  Abstract}
\bigskip
\leftskip10truemm \rightskip10truemm \noindent

  A ring $R$ is called left strictly $(<\aleph_{\alpha})$-noetherian if $\aleph_{\alpha}$ is the minimum cardinal such that every  ideal of $R$ is $(<\aleph_{\alpha})$-generated.   In this note, we show that for every singular (resp., regular) cardinal $\aleph_{\alpha}$, there is a valuation domain $D$, which is strictly $(<\aleph_{\alpha})$-noetherian (resp., strictly $(<\aleph_{\alpha}^+)$-noetherian), positively answering a problem proposed in \cite{Marcos25} under some set theory assumption.
\vbox to 0.3cm{}\\
{\it Key Words:}    strictly $(<\aleph_{\alpha})$-noetherian ring, valuation domain, weakly inaccessible cardinal.\\
{\it 2020 Mathematics Subject Classification:}  16P40.

\leftskip0truemm \rightskip0truemm
\bigskip

\section{Introduction}
Throughout this note, all rings are associative with unity, and  all ideals and modules  are left. Let $\{r_i\mid i\in\Lambda\}$ be a set of elements of $R$, we denote by $\langle \{ r_i\mid i\in\Lambda\}\rangle$ be the ideal generated by these elements. 

Recall that a ring $R$ is said to be  left \emph{noetherian} if every  ideal is finitely generated. It is interesting to consider the minimum  cardinal $\gamma(R)$ such that every  ideal of $R$ is $(<\gamma(R))$-generated. So noetherian rings have exactly $\gamma(R)\leq \aleph_0$. Trivially, PIRs  have exactly $\gamma(R)=2$, and  Dedekind domains which is not PIDs have  $\gamma(R)=3$. Furthermore, Matson \cite{Matson09} give an example of a ring $R$ with $\gamma(R)=n$ for each $n\geq 2$.

Recently, Marcos \cite{Marcos25}  parametrized   rings by the cardinality of generators of all ideals. Actually, he introduced the notion of $(<\aleph_{\alpha})$-noetherian rings for any cardinal $\aleph_{\alpha}$.  A ring $R$ is left $(<\kappa)$-noetherian if every  ideal of $R$ is $(<\kappa)$-generated, that is, $\gamma(R)\leq \kappa$.
When $\aleph_{\alpha}$ is regular, Marcos \cite{Marcos25} characterized $(<\aleph_{\alpha})$-noetherian rings in terms of limit models and $\aleph_{\alpha}$-injective modules.

We say  a ring $R$ is left strictly $(<\kappa)$-noetherian if $\gamma(R)=\kappa$. The existence of strictly $(<\aleph_{\alpha})$-noetherian rings is key to their studies. When $\aleph_{\alpha+1}$ is a successor cardinal, Marcos \cite{Marcos25} give an example of strictly $(<\aleph_{\alpha+1})$-noetherian ring by $\aleph_{\alpha}$-variables polynomial extensions of the field $\mathbb{Q}$. Subsequently, he \cite{Marcos25} proposed a problem on the existence of
strictly $(<\aleph_{\alpha})$-noetherian rings when $\aleph_{\alpha}$ is a limit cardinal:\\
\textbf{Problem 2.6.} \cite[Problem 2.9]{Marcos25}   Show that for every $($some$)$ limit ordinal $\alpha$ there is a ring which is left $(<\aleph_\alpha)$-noetherian, but not left $(<\aleph_\beta)$-noetherian for every $\beta < \alpha$.

The main motivation of this note is to investigate this problem. Actually, we study the cardinal of generators of ideals of a valuation domain $D$ with valuation group  $G=\bigoplus_{i\in\aleph_\alpha}\mathbb{Q}$. We obtain that if $\aleph_\alpha$ is a regular cardinal, then $D$ is a strictly $(<\aleph_\alpha^{+})$-noetherian domain; if $\aleph_\alpha$ is a singular cardinal, then $D$ is a strictly  $(<\aleph_\alpha)$-noetherian domain (see Theorem \ref{main}). Hence, we positively answer the Problem 2.6 under the set theory assumption: the weakly inaccessible cardinals do not exist.

\section{main result}
We first recall some basic knowledge on set theory (see \cite{J03} for more details). 
Let  $\kappa$ (resp., $\alpha$) be a cardinal (resp., an ordinal). We denote by $\kappa^+$ (resp., $\alpha^+$) the least cardinal (resp., an ordinal) greater than $\kappa$. Let $\Lambda$ be a set, we denote by $|\Lambda|$ its cardinal.

Let $\alpha$ be an ordinal, and $u\in\alpha$, then $\{x\in \alpha\mid x<u\}$ is called an \emph{initial segment} of $\alpha$. Let $\alpha$ be an ordinal. If $\alpha=\beta^+$
for some ordinal $\beta$ (which is denoted by $\alpha^-$), then $\alpha$ is called a \emph{successor ordinal}, otherwise $\alpha$ is called a \emph{limit ordinal}. A cardinal $\aleph_{\alpha}$ is called a \emph{successor cardinal}  provided that  $\alpha$ is  a successor ordinal, otherwise $\aleph_\alpha$ is called a \emph{limit cardinal}. 

Let $\alpha>0$ be a limit ordinal. the \emph{cofinality} of $\alpha$, denoted by $\cf(\alpha)$, is the least limit ordinal $\beta$ such that there is an increasing sequence $\{\alpha_\xi\mid\xi<\beta\}$ with $\lim\limits_{\xi\rightarrow\beta}\alpha_{\xi}=\alpha.$ 
An infinite  cardinal $\kappa$ is said to be \emph{regular} (resp., \emph{singular}) if $\cf(\kappa)=\kappa$ (resp., $\cf(\kappa)<\kappa$). An uncountable cardinal $\kappa$ is said to be \emph{weakly inaccessible} if it is a limit cardinal and is regular. It is well-known that the existence of weakly inaccessible cardinals is consistent with ZFC.

\begin{definition} \cite[Definition 2.1]{Marcos25} 
	Let $\kappa$ be an infinite cardinal. An ideal $I$ of a ring $R$ is $(<\kappa)$-generated if there is $\mu < \kappa$ and $\{a_i\mid i < \mu\} \subseteq I$ such that $I = \langle \{a_i\mid i < \mu\} \rangle$. $I$ is strictly $\kappa$-generated if it is $(<\kappa^+)$-generated but not $(<\kappa)$-generated. 
\end{definition}

The following statement shows that for every for every regular cardinal $\mu <\kappa$, every strictly $\kappa$-generated ideal contain a strictly $\mu$-generated sub-ideal. 

\begin{proposition} \cite[Proposition 2.2]{Marcos25}  Let $R$ be a ring, $I$ an ideal, and $\kappa$ an infinite cardinal. If $I$ is a strictly $\kappa$-generated ideal, then for every regular cardinal $\mu < \kappa$ there is $J$ a strictly $\mu$-generated ideal such that $J \subseteq I$.
\end{proposition}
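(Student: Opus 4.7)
The plan is to construct $J$ as the union of a strictly ascending chain of $\mu$ sub-ideals of $I$, each $(<\mu)$-generated, and then to use the regularity of $\mu$ to upgrade "$(<\mu^+)$-generated" to "strictly $\mu$-generated" for the resulting union. Fix an infinite regular cardinal $\mu<\kappa$. Note first that since $\mu<\kappa$, any $(<\mu)$-generated ideal is a fortiori $(<\kappa)$-generated; hence the hypothesis that $I$ is not $(<\kappa)$-generated already guarantees that $I$ is not $(<\mu)$-generated, and more generally no $(<\mu)$-generated sub-ideal of $I$ can exhaust $I$.

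By transfinite recursion on $\nu<\mu$, I will choose elements $a_\nu\in I$ so that the ideals $J_\nu:=\langle a_\xi:\xi<\nu\rangle$ satisfy $J_\nu\subsetneq J_{\nu+1}\subseteq I$. At stage $\nu$, the ideal $J_\nu$ is generated by $|\nu|<\mu$ elements and hence is $(<\kappa)$-generated; because $I$ is not $(<\kappa)$-generated, $J_\nu\subsetneq I$, so we may pick some $a_\nu\in I\setminus J_\nu$. At limit stages the union is automatic. Set
\[
J:=\bigcup_{\nu<\mu}J_\nu=\langle a_\nu:\nu<\mu\rangle\subseteq I.
\]
Then $J$ is $(<\mu^+)$-generated by construction.

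The core step is verifying that $J$ is not $(<\mu)$-generated. Suppose, for contradiction, that $J=\langle b_i:i<\nu\rangle$ with $\nu<\mu$. Each $b_i$ lies in some $J_{\xi_i}$ with $\xi_i<\mu$. Since $\mu$ is regular and $|\nu|<\mu$, the supremum $\xi^*:=\sup_{i<\nu}\xi_i$ is strictly less than $\mu$, so every $b_i$ belongs to $J_{\xi^*}$ and therefore $J\subseteq J_{\xi^*}$. But by construction $a_{\xi^*}\in J_{\xi^*+1}\setminus J_{\xi^*}\subseteq J\setminus J_{\xi^*}$, a contradiction. Hence $J$ is strictly $\mu$-generated.

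The proof is essentially a routine transfinite induction; there is no genuine obstacle beyond the bookkeeping. The one crucial ingredient is the regularity of $\mu$, used exactly once to bound $\xi^*$ below $\mu$. Without regularity, a cofinal sequence of length $<\mu$ in $\mu$ could witness a $(<\mu)$-generating set for $J$, and the argument would break down—indicating why the statement is formulated only for regular $\mu$.
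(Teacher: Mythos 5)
Your argument is correct. Note that the paper itself does not prove this proposition---it is quoted from \cite[Proposition 2.2]{Marcos25} without proof---so there is nothing to compare against in the source; your construction (a transfinite recursion of length $\mu$ picking $a_\nu\in I\setminus J_\nu$, justified because a $(<\mu)$-generated sub-ideal cannot equal the non-$(<\kappa)$-generated ideal $I$, followed by the regularity argument bounding $\sup_{i<\nu}\xi_i$ below $\mu$) is the standard one and is sound, including the correct identification of where regularity of $\mu$ is indispensable.
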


Given a ring $R$. We denote by $\gamma(R)$ be the minimum infinite cardinal such that every  ideal of $R$ is $(<\gamma(R))$-generated.

The author in \cite{Marcos25} parametrized noetherian rings using the cardinality of  generators of all  ideals.

\begin{definition} \cite[Definition 2.5]{Marcos25} Let $\kappa$ be an infinite cardinal. A ring $R$ is left $(<\kappa)$-noetherian if every  ideal of $R$ is $(<\kappa)$-generated, i.e., $\gamma(R) \leq \kappa$. 
 \end{definition}

Trivially, a left $(<\aleph_0)$-noetherian ring is precisely a left noetherian ring. For convenience, we introduce the notion of strictly $(<\kappa)$-noetherian rings.
\begin{definition} Let $\kappa$ be an infinite cardinal. A ring $R$ is called left strictly $(<\kappa)$-noetherian if $\gamma(R) = \kappa$, that is, every  ideal of $R$ is $(<\kappa)$-generated, and 
\begin{enumerate}
	\item if $\kappa$ is a limit cardinal, then 	for every cardinal $\mu<\kappa$ there is an  ideal of $R$ which  is not $(<\mu^+)$-generated;
	\item if $\kappa$ is a successor cardinal, then there is a strictly $\kappa^-$-generated ideal of $R$. 
\end{enumerate}	
\end{definition}

Note that   $\mathbb{F}[x,y]$ is an example of strictly $(<\aleph_0)$-noetherian ring (see \cite[Example 2.3]{Matson09}). For every successor cardinal $\alpha+1$, Marcos \cite{Marcos25} provided an example of  strictly $(<\aleph_{\alpha+1})$-noetherian ring.

\begin{example} \cite[Example 2.6]{Marcos25}  Let $\alpha$ be an ordinal. Let $R = \mathbb{Q}[x_i\mid i < \aleph_\alpha]$, the polynomial ring over $\mathbb{Q}$ with commuting variables $\{x_i\mid i < \aleph_\alpha\}$. $R$ is $(<\aleph_{\alpha+1})$-noetherian as $|R| = \aleph_\alpha$ and $R$ is not $(<\aleph_\alpha)$-noetherian as $\langle\{ x_i\mid i < \aleph_\alpha\} \rangle$ is not $(<\aleph_\alpha)$-generated.
\end{example} 

Subsequently, he \cite{Marcos25} asked the following problem:
\begin{problem}\label{prob} \cite[Problem 2.9]{Marcos25}   Show that for every $($some$)$ limit ordinal $\alpha$ there is a ring which is left $(<\aleph_\alpha)$-noetherian, but not left $(<\aleph_\beta)$-noetherian for every $\beta < \alpha$, that is, a  left strictly  $(<\aleph_\alpha)$-noetherian ring.
\end{problem}

The main result of this paper is to resolve this problem when $\aleph_{\alpha}$ is a singular cardinal. To move on, we recall some basic notions on valuation domains (see \cite[Chapter II]{fs01}  for more details).

Let \((G,+, \leq)\) be a totally ordered group and $Q$ a field. A map $v: Q \rightarrow G \cup \{\infty\}$, where $\infty$ is a symbol not in $G$ satisfying that $x < \infty$ and $x + \infty = \infty$ for all $x \in G$, is called a \emph{valuation map} if the following conditions hold:

\begin{enumerate}
	\item $v(0) = \infty$,
	\item  $v(xy) = v(x)+v(y)\text{ for all } x, y \in Q.$, and
	\item $v(x+y) \geq \min\{v(x), v(y)\} \text{ for all } x, y \in Q.$
\end{enumerate}

Let $D$ be an integral domain with its quotient field $Q$. Then $D$ is called a \emph{valuation domain} if either $x|y$ or $y|x$ for any $x,y\in D$. It is well-known that an integral domain $D$ is a valuation domain if and only if there is a valuation map $v: Q \rightarrow G \cup \{\infty\}$ such that $D=\{r\in Q\mid v(r)\geq0\}.$ In this case, $G$ is called the \emph{valuation group} of $D$. Note that $\m=\{r\in D\mid v(r)>0\}$ is the maximal ideal of $D$. So if $v(x)=v(y)$, then $x=uy$ for some unit $u$ of $D$. A well-known result introduced by Krull states that every total abelian group can be seen as a valuation group of a valuation domain $D$ (see \cite[Chapter II, Theorem 3.8]{fs01}).

Let $\alpha$ be an ordinal. Let $D$ be a valuation domain, where its valuation group  $G=\bigoplus_{i\in\aleph_\alpha}\mathbb{Q}$ the direct sum of $\aleph_\alpha$-copies of rational numbers $\mathbb{Q}$. It becomes a total ordered abelian group if one defines $r=(\cdots,r_i,\cdots)>0$ whenever for the first element $j$ in $supp(r)$, one  haves $r_j>0$ in $\mathbb{Q}$.

\begin{theorem}\label{main} Let $D$ be the valuation domain as above. Then the following statements hold.
\begin{enumerate}
\item  If $\aleph_\alpha$ is a regular cardinal, then $D$ is a strictly $(<\aleph_\alpha^{+})$-noetherian domain.
\item  If $\aleph_\alpha$ is a singular cardinal, then $D$ is a strictly  $(<\aleph_\alpha)$-noetherian domain.		
\end{enumerate} 
\end{theorem}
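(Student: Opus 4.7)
The plan is to leverage the correspondence between ideals of $D$ and upper sets of $G^+$: for a nonzero ideal $I$, $\gamma(I)$ equals the \emph{coinitiality} of the value set $V(I) = \{v(x) : 0 \neq x \in I\}$, i.e.\ the least cardinality of a subset $S \subseteq V(I)$ such that every $g \in V(I)$ dominates some element of $S$. This holds since a family $\{x_i\}$ generates $I$ precisely when, for each $y \in I$, there exists $i$ with $v(y) \geq v(x_i)$, in which case $v(y/x_i) \geq 0$ forces $y/x_i \in D$.

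Two uniform facts control the $\gamma$-values. First, since each element of $G = \bigoplus_{\aleph_\alpha}\mathbb{Q}$ has finite support, $|G| = \aleph_\alpha$, so $\gamma(I) \leq |V(I)| \leq \aleph_\alpha$ for every ideal $I$. Second, by a standard thinning argument, the coinitiality of any non-principal upper set is a \emph{regular} infinite cardinal.

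The central computation is $\gamma(\mathfrak{p}_\beta) = \cf(\beta)$ for the prime ideal $\mathfrak{p}_\beta$ corresponding to the convex subgroup $H_\beta = \bigoplus_{i \in [\beta,\aleph_\alpha)}\mathbb{Q}$, for each limit ordinal $\beta \leq \aleph_\alpha$. Writing $\ell(g)$ for the position of the first nonzero coordinate of $g$, a direct lex-order check yields $V(\mathfrak{p}_\beta) = \{g \in G : g > 0,\, \ell(g) < \beta\}$ and $e_j < g$ whenever $\ell(g) < j$. Choosing a cofinal sequence $(i_\xi)_{\xi < \cf(\beta)}$ in $\beta$, the family $\{e_{i_\xi}\}$ is coinitial in $V(\mathfrak{p}_\beta)$, giving $\gamma(\mathfrak{p}_\beta) \leq \cf(\beta)$. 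The reverse inequality---which I expect to be the main obstacle---is a cofinality argument: if $(h_\zeta)_{\zeta < \mu}$ were coinitial with $\mu < \cf(\beta)$, then $i^{*} := \sup_\zeta \ell(h_\zeta) < \beta$, and choosing $j$ with $i^{*} < j < \beta$, the element $e_j \in V(\mathfrak{p}_\beta)$ is dominated by no $h_\zeta$ (since $h > 0$ and $h \leq e_j$ force $\ell(h) \geq j$), contradicting coinitiality.

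Assembling these ingredients yields the theorem. In the regular case, $\mathfrak{m} = \mathfrak{p}_{\aleph_\alpha}$ has $\gamma(\mathfrak{m}) = \aleph_\alpha$, so $D$ is not $(<\aleph_\alpha)$-noetherian, while the uniform bound $\gamma(I) \leq \aleph_\alpha$ shows $D$ is $(<\aleph_\alpha^{+})$-noetherian; hence $\gamma(D) = \aleph_\alpha^{+}$, witnessed by the strictly $\aleph_\alpha$-generated ideal $\mathfrak{m}$. In the singular case, every non-principal $\gamma(I)$ is regular and $\leq \aleph_\alpha$; since $\aleph_\alpha$ itself is not regular, this forces $\gamma(I) < \aleph_\alpha$ for every $I$. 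For strictness, any $\mu < \aleph_\alpha$ admits $\mu^+ < \aleph_\alpha$ (as $\aleph_\alpha$ is a limit cardinal), and $\gamma(\mathfrak{p}_{\mu^+}) = \mu^+ > \mu$, so $D$ is not $(<\mu^+)$-noetherian; thus $\gamma(D) = \aleph_\alpha$.
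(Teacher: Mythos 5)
Your proposal is correct and follows essentially the same route as the paper: the lexicographic analysis of $G=\bigoplus_{\aleph_\alpha}\mathbb{Q}$, the coinitial family $\{e_i\}$, the bound $\gamma(I)\le|G|=\aleph_\alpha$, and cofinality arguments against small generating sets are exactly the paper's ingredients, with your witnessing ideals $\mathfrak{p}_{\mu^+}$ coinciding with the paper's ideals built from initial segments of $\{e_i\}$. Your packaging via the coinitiality of value sets (and the observation that non-principal coinitialities are regular, which replaces the paper's explicit extraction of a cofinal subchain in the singular case) is a cleaner organization of the same argument, not a different one.
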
	

\begin{proof}
(1) Suppose $\aleph_\alpha$ is a regular cardinal, that is, $\cf(\aleph_\alpha)=\aleph_\alpha$. Let $\m$ be the maximal ideal of $D$. Then $\m$ is generated by $\{x_i\in D\mid v(x_i)=e_i, i\in\aleph_\alpha\},$ where $e_i$ is an $\aleph_\alpha$-sequence with the $i$-th component $1$ and others $0$. So $\m$ is $(<\aleph_\alpha^{+})$-generated. We claim that $\m$ is not $(<\aleph_\alpha)$-generated. Indeed, otherwise $\m$ can be generated by $\{m_j\mid j\in \aleph_\beta\}$  elements with $\beta<\alpha$. Since $\cf(\aleph_\alpha)=\aleph_\alpha$, there is $i_0\in\aleph_\alpha$ such that $e_{i_0}<v(m_j)$ for any $j\in\aleph_\beta$. Assume that $v(m)=e_{i_0}$. Then each $m_j$  is divided by $m$, but $m$ can not divided by $m_j$ for any $j\in\aleph_\beta$. Hence $m\not\in \m$, which is a contradiction.

Next, we will show every ideal $I$ of $D$ is $(<\aleph_\alpha^{+})$-generated. Indeed, suppose $I=\langle \{ m_i\mid i\in\Gamma \}\rangle$. Let $\Gamma'$ be a subset of $\Gamma$ satisfying that $\{v(m_i)\mid i\in\Gamma\}=\{v(m_i)\mid i\in\Gamma'\}$ and $v(m_i)\not=v(m_j)$ for every elements $i\not= j$ in $\Gamma'$. It is easy to verify that $I=\langle\{ m_i\mid i\in\Gamma'\}\rangle$. Since the cardinal of $\Gamma'$ is not greater that of $G=\bigoplus_{i\in\aleph_\alpha}\mathbb{Q}$. As the latter is equal to $\aleph_\alpha$, we have $|\Gamma'|\leq \aleph_\alpha$. Hence  $I$  is $(<\aleph_\alpha^{+})$-generated.

Consequently, $D$ is a $(<\aleph_\alpha^{+})$-noetherian domain but not $(<\aleph_\alpha)$-noetherian, that is, $D$ is  a strictly $(<\aleph_\alpha^{+})$-noetherian domain.

(2) Now suppose $\aleph_\alpha$ is a singular cardinal.
Let $I$ be an ideal of $D$. As the proof of the above, we can  show that  $I$ of $D$ is $(<\aleph_\alpha^{+})$-generated. Next, we will show that $I$ is actually $(<\aleph_\alpha)$-generated. Indeed, assume that  $I=\langle \{r_i\mid i\in \aleph_\alpha\}\rangle$ with the property that $r_i|r_j$ when $i>j$ in $\aleph_\alpha$. Since $\aleph_\alpha$ is singular, $\cf(\aleph_\alpha)<\aleph_\alpha$. Then there is a subset $\Gamma$ of $\aleph_\alpha$ with  $|\Gamma|=\cf(\aleph_\alpha)$ such that $I=\langle \{ r_i\mid i\in \Gamma\}\rangle$.  Consequently, $I$ is actually $(<\aleph_\alpha)$-generated.

Next, we will show for any $\beta<\alpha$, there is an ideal $I$ of $D$ such that $I$ is not $(<\aleph_\beta)$-generated. 	Since  $\aleph_\alpha$ is a singular cardinal, $\alpha$ is a limit ordinal. Hence $\beta^{+}< \alpha$. Since $\aleph_{\beta^+}<\aleph_{\alpha}$, there is an initial segment $\Lambda$ of $\aleph_{\alpha}$ with $|\Lambda|=\aleph_{\beta^+}$. Let $I$ be an ideal of $D$ generated by $\{r\in D\mid v(r)=e_i,i\in \Lambda\}$.
Then $I$ can be generated by $\aleph_{\beta^+}$ elements. We claim that  $I$ is not $(<\aleph_{\beta^+})$-generated. Indeed, assume that $I$ is generated by $\aleph$ elements $\{r_j\mid j\in \aleph\}$ with $\aleph<\aleph_{\beta^+}$. So $\aleph$ can be seen as a proper well-ordering subset of $\Lambda$. Since $\aleph_{\beta^+}$ is regular, there is $i_0\in\Lambda-\aleph$ such that $e_{i_0}<v(r_j)$ for any $j\in\aleph$ as in $(1)$. Assume that $v(r)=e_{i_0}$. Then each $r_j$  is divided by $r$, but $r$ can not divided by $r_j$ for any $j\in\aleph$. Hence $r\not\in I$, which is a contradiction.

Consequently, $D$ is a $(<\aleph_\alpha)$-noetherian domain but not $(<\aleph_\beta)$-noetherian for any $\beta<\alpha$, that is, $D$ is  a strictly $(<\aleph_\alpha)$-noetherian domain.		
\end{proof}

\begin{remark} Suppose $\aleph_\alpha$ is a singular cardinal, then $\alpha$ is a limit ordinal and $D$ is the required strictly $(<\aleph_\alpha)$-noetherian ring in Problem \ref{prob}. Since the non-existence of weakly inaccessible cardinals is consistent with ZFC, the existence of strictly $(<\aleph_\alpha)$-noetherian ring for all limit ordinal $\alpha$ is consistent with ZFC.
\end{remark}

\end{document}